\newtheorem{lma}{Lemma}
\newtheorem*{prp*}{Proposition}
\DeclareMathOperator{\Xc}{\mathcal{X}}
\DeclareMathOperator{\Yc}{\mathcal{Y}}
\DeclareMathOperator{\R}{\mathbb{R}}
\title{Genetic Column Generation for Computing Lower Bounds for Adversarial Classification}
\date{June 11, 2024}
\author{Maximilian Penka\footnote{ Department of Mathematics, Technische Universit\"{a}t M\"{u}nchen, Germany\\Email: \texttt{penka@ma.tum.de}}}
\begin{document}
\maketitle

\begin{abstract}
\noindent Recent theoretical results on adversarial multi-class classification showed a similarity to the multi-marginal formulation of Wasserstein-barycenter in optimal transport. Unfortunately, both problems suffer from the curse of dimension, making it hard to exploit the nice linear program structure of the problems for numerical calculations. We investigate how ideas from Genetic Column Generation for multi-marginal optimal transport can be used to overcome the curse of dimension in computing the minimal adversarial risk in multi-class classification.
\end{abstract}

\section{Introduction}
Multi-class classification is a standard task in data science.
While it is easy to train a classifier with (almost) vanishing risk on the training data, it is also known that those methods are often not very robust to small perturbations of the data points 
\cite{goodfellow2014explaining, nguyen2015deep}.

Hence, the challenge has shifted to finding robust classifiers. 
A well-established approach is adversarial training, where an attacker is allowed to slightly perturb the data distribution in order to maximize the risk of the classification mimicking a two-player game 
\cite{goodfellow2014explaining, madry2017towards, bai2021recent, zhao2022adversarial}.

The classical ansatz is to allow the attacker to maximize the loss by perturbing a data point within an $\varepsilon$-ball -- called budget -- with respect to the metric of the feature space. From an optimal transport perspective, this is equivalent to perturbing the empirical measure induced by the data set within a ball in the Wasserstein space $W_\infty$ with radius $\varepsilon$ in order to maximize the empirical risk of the classifier.

While the initial motivation for that problem was to find a robust training strategy, the ansatz can be generalized to a distributional setting to study the problem independent of the training procedure \cite{sinha2017certifying,esfahani2018data}. A fundamental theoretical problem in adversarial classification addressed in this work is the following. Is my data set sufficient to train a robust classifier? More precisely, that is the minimal risk any classifier can achieve, given a data set and an adversarial budget.

In a recent work of \cite{trillos2023multimarginal}, a reformulation of this lower bound was found that can be seen as relaxation in linear program (LP) form and, in its structure, is related to the barycenter problem in optimal transport \cite{agueh2011barycenters}. Unfortunately, the number of unknowns in that problem scales polynomially in the number of data points and even exponentially in the number of classes. However, from the LP structure, it is known that the problem admits an extremely sparse solution.

In a follow-up work \cite{trillos2024optimal}, the authors provided a numerical approach using truncation and sub-sampling and argued that for data sets with little overlap of many classes, that provides a good approximation.

In this paper those limitations are addressed. First, for a data set of fixed size $N$ (which will correspond to the number of constraints in the linear program), a high number of classes corresponds to a low number of data points per class. Further sub-sampling would then jeopardize the approximation validity of the empirical measure.

Coming from multi-marginal optimal transport (MMOT), we will choose a different approach. The recently introduced Genetic Column Generation Algorithm (GenCol) is an efficient routine to solve multi-marginal problems by generating candidate configurations in a genetic fashion and maintaining a sparse set of configurations.
For MMOT problems arising in quantum physics, as well as for Wasserstein-barycenter and -splines, the algorithm showed impressive performance. \cite{friesecke2022genetic,friesecke2023gencol}.
By invoking the ideas from genetic column generation, we will tackle the second limitation. The algorithms presented here do not exclude any configurations from the outset. The reduction of the problem size is dynamically maintained in a genetic fashion.

The paper is structured as follows: In Section \ref{sec:AdvRisk}, we first set up the mathematical background, mainly following \cite{trillos2023multimarginal}. The second part introduces the linear program and translates the problem into the language for GenCol.

The next section discusses the genetic search rules, which need to be modified for the problem. In contrast to a pure multi-marginal transport problem, the new problem features configurations of different lengths.

In addition, we will explore the effects of a different penalty term by replacing the classical adversarial budget -- related to a bound on the Wasserstein-$\infty$ deviation from the training data -- with a Wasserstein-2 penalty. 
That enables us to use duality as a powerful critic to accelerate the genetic search for new configurations.

The last part demonstrates the application of the algorithm to synthetic data of 10 classes with a huge overlap in classes and a subset of 30 classes of the CIFAR-100 data set \cite{krizhevsky2009learning}.
In both examples, the data set has the property that truncation would lead to a significant underestimation of the adversarial risk.

\section{Adversarial risk for classification problems}
\label{sec:AdvRisk}
The starting point for the following considerations is a generalization of the adversarial classification problem following the work of \cite{trillos2023multimarginal}.
As mentioned in the introduction, we take a distributional perspective. We assume the data set of interest is distributed according to a probability measure $\mu$ on the Cartesian product of a Polish feature space $\Xc$ and a finite label space $\Yc = \{1,\dots,K\}$, the latter equipped with the discrete topology.
A realization is hence a pair $(x,i)$ of feature $x\in\Xc$ and label $i \in \Yc$.
We then consider probabilistic classifiers $f:\Xc \to \{p \in [0,1]^K : |p|_1 = 1\}$. 
The quantity $f_i(x)$ is simply the probability that point $x \in \Xc$ belongs to class $i \in \Yc$.
The set of classifiers is the set of (Borel-)measurable functions like that and will be denoted by $\mathcal{F}$.

The metric of interest is the risk of a classifier with respect to a loss of 0-1 type:
\[R(f,\mu) := \int_{\Xc\times \Yc} 1-f_y(x)\ d\mu(x,y). \]
Since the label set $\Yc$ is discrete, we can slice the measure $\mu$ along the classes $i\in \Yc$ into $\mu_i := \mu(\cdot \times \{i\})$.
The $\mu_i$ are positive measures on $\Xc$, but not necessarily probability measures because, in contrast to conditional probabilities, they are not normalized. 
That yields the decomposition of the risk
\[R(f,\mu) = 1- \sum_{i=1}^N \int_{\Xc}f_i(x)\ d\mu_i(x).\]
The classical (distributional) adversarial risk for an adversarial budget $\varepsilon > 0$ is
\begin{equation} \label{eq:probadvrisk}
    \sup_{\tilde \mu}\,\Big\{R(f,\tilde \mu) ~\Big\vert~ W_\infty(\tilde \mu_i,\mu_i) \leq \varepsilon \, \forall i \in \Yc \Big\},
\end{equation}
where $W_\infty$ is the Wasserstein-$\infty$ distance of $\mu_i$ and $\tilde \mu_i$. 
The definition of distributional adversarial risk can easily be generalized by replacing the $W_\infty$-ball with a general penalty term on $\mu$ and $\tilde \mu$:
\[ \sup_{\tilde \mu} \ R(f,\tilde \mu) - C(\mu,\tilde \mu). \]
The domain of $\tilde \mu$ and the formulation of $C$ is a modeling choice, and it is non-trivial that the problem is well-posed \cite{trillos2023existence, trillos2023multimarginal}. 
For now, we will continue with this general setup and specify concrete functions and domains later.

As explained in the introduction, we are interested in the lower bound for the adversarial risk for \emph{any} classifier $f$ in the set of all probabilistic classifiers $\mathcal{F}$. We study therefore the saddle point problem 
\begin{equation} \label{eq:relaxedform}
   \adjustlimits  \inf_{f \in \mathcal{F}} \sup_{\tilde \mu } \ R(f,\tilde \mu) - C(\mu,\tilde \mu).
\end{equation}
The idea from \cite{trillos2023multimarginal} is to study penalty terms related to optimal transport problems. Because one wants the adversarial attack to act only on the state space $\Xc$, one again decomposes the measures $\mu$ and $\tilde \mu$ in $\mu_1,\dots,\mu_K$, resp. $\tilde\mu_1,\dots,\tilde\mu_K$ along the classes $i \in \Yc$ to define
\begin{equation}\label{eq:defC}
    C(\mu,\tilde\mu) := \sum_{i=1}^K \inf\left\{\int_{\Xc} c(x,y)\,d\gamma(x,y) ~\middle\vert~ \gamma \in \Pi(\mu_i,\tilde \mu_i)\right\},
\end{equation}
where $\Pi(\mu_i,\tilde \mu_i)$ is the set of transport plans for the marginals $\mu_i$ and $\tilde \mu_i$. The cost function $c: \Xc \times \Xc \to [0,\infty]$ has to be lower semi-continuous and satisfy $c(x,x) = 0$. This definition ensures $\mu_i(\Xc) = \tilde \mu_i(\Xc)$ for all $i = 1,\dots,K$ because otherwise, the set of transport plans is empty and by convention the infimum is $+\infty$.

The authors then show that problem \eqref{eq:relaxedform} is equivalent to a problem which has a structure similar to a multi-marginal optimal transport problem:
\begin{subequations}
\label{eq:MMOT-like}
\begin{align} 
    \inf_{\{\gamma_A\}}\quad &\smashoperator[l]{\sum_{A \in S_K}} \int_{\Xc^{|A|}} c_A + 1 \ d\,\gamma_A \\ 
    \text{subject to } &\smashoperator[l]{\sum_{{A \in S_K(i)}}} (e_i)_\sharp\gamma_A = \mu_i, \textrm{ for all } i \in Y.
\end{align}
\end{subequations}
The set $S_K$ is the power set of $\Yc = \{1,\dots,K\}$ except for the empty set, the set $S_K(i)$ is its subset of all sets containing $i \in \Yc$.
$\gamma_A$ are positive measures on the product space $\Xc^{|A|}$, and $(e_i)_\sharp\gamma_A$ the push-forward under the evaluation function on the $i$-th marginal, $i\in A$. 
Note that only $\sum_{A \in S_K} \gamma_A$ is a probability measure.
$c_A$ is the cost function assigning a cost to each configuration $w \in \Xc^{|A|}$ and is directly derived from the cost function $c$ in \eqref{eq:defC}:
\begin{equation}
    c_A(\omega) := \inf_{\bar x\in\Xc} \sum_{x_i \in \omega} c(x_i,\bar x)
\end{equation}
The relation between the two problems is $\eqref{eq:relaxedform} = 1 - \eqref{eq:MMOT-like}$.

To recover the adversarial attacks described in \eqref{eq:probadvrisk} from this generalized formulation, choose $c_A(w)$ to be 0 if the radius of the smallest enclosing ball with respect to the metric $d$ on $\Xc$ of the configuration $w \in \Xc^{|A|}$ is less than the budget $\varepsilon$, and $+\infty$ else. 

Surprisingly, this formulation is much better tractable because in its discrete version -- $\mu$ approximated by the empirical measures $\mu^N := \frac{1}{N}\sum_i \delta_{(x_i,y_i)}$ -- it becomes a linear program. That shall be the starting point for all the following considerations.

\subsection{Discretization and linear program formulation}

The linear program's variables $\gamma(r)$ shall be indexed by configurations $r \subset \{1,\dots,N\}$. 
A \emph{feasible configuration} $r = \{r_1,\dots,r_m\}$ must have pairwise distinct classes $y_{r_j} \neq y_{r_l}$. 
That implies $m \leq K$. Those labels $\{y_{r_1},\dots,y_{r_m}\}$ correspond to the sets $A$ in the general formulation above and will be called the \emph{classes of the configuration} $r$.

For the classical adversarial problem \eqref{eq:probadvrisk}, the value of $c(r)$ simply depends on the radius of the smallest enclosing ball of $\{x_{r_1},...,x_{r_m}\}$ being less than the adversarial budget $\varepsilon$.
Hence, we define the radius of a configuration $r$
\[\operatorname{radius}(r) := \inf\{ \delta > 0 : \exists x \in \Xc \text{ with } \{x_{r_1},\dots,x_{r_m}\} \subset B_\delta(x) \},\]
or equivalently
\[ \operatorname{radius}(r) := \inf\{\delta > 0 : \min_{x \in \Xc} \max_{y \in \{x_{r_1},...,x_{r_m}\}} d(x,y) \leq \delta\}. \]
Compared to the general notation above, we get the mapping $r \mapsto (x_{r_1},\dots,x_{r_m}) \mathrel{\widehat{=}} w$, and $c$ does not depend on $A$ because the smallest enclosing ball can be defined independent of the length of the configuration $r$.

\noindent The cost coefficients are defined
\[
c(r) = \begin{cases} 1 &\operatorname{radius}(r) \leq \varepsilon\\ +\infty &else. \end{cases}
\]
With this setup, the objective becomes the linear program
\begin{subequations}
	\label{eq:LP}
\begin{align}
	\operatorname*{minimize}\ &\sum_{r \text{ feasible }}  c(r)\gamma(r) ,
	\shortintertext{subject to the constraints}
	&\sum_{r \text{ feasible }} \mathds{1}_{\{i \in r\}} \gamma(r) = \mu(\{(x_i,y_i)\}) = \frac{1}{N} \text{ for all } i = 1,...,N.
\end{align}
\end{subequations}
We will refer to the optimal value of the minimization problem as the optimal cost. The challenge shifts to the problem of finding all feasible configurations, denoted by $\overline{\Omega}$. 
In general, this is impossible and should not be done because the problem suffers from the curse of dimensions; its number is $\sum_{A\in S_K} \prod_{i \in A} n_i$. For example, for $K = 10$ classes with $n_i = 100$ data points per class, it is $\sum_{k = 1}^{10} {10\choose k} 100^k > 10^{20}$.\\

In standard LP form, $\min c^{T}\gamma$ s.t.\ $A\gamma=\mu, \gamma \geq 0$, all cost coefficients are either $1$ or $+\infty$.
The constraint matrix $A$ has $N$ rows, one per data point.
Its columns correspond to the configurations $r$: 1 if the data point is in the configuration and 0 everywhere else. Therefore, the matrix is very sparse.

The crucial observation for a promising computational approach is that a sparse optimal solution $\gamma^\star$ exists.
\begin{prp*}
	Problem \eqref{eq:LP} admits an optimizer $\gamma^*$ with  $|\operatorname{spt}\gamma^\star| \leq N$.
\end{prp*}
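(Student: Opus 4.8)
The plan is to invoke the standard fact that a feasible bounded linear program attains its optimum at a vertex (basic feasible solution) of the feasible polytope, together with the observation that basic feasible solutions of a system with $N$ equality constraints have at most $N$ nonzero entries.

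First I would observe that problem \eqref{eq:LP} is feasible: the configuration $r=\{i\}$ consisting of a single data point has $\operatorname{radius}(r)=0\le\varepsilon$, hence finite cost $c(r)=1$, and setting $\gamma(\{i\})=\tfrac1N$ for each $i=1,\dots,N$ and $\gamma(r)=0$ otherwise satisfies all $N$ constraints. Next, since we only ever care about configurations $r$ with $c(r)<\infty$ (any solution using a configuration of infinite cost would have infinite objective, while the feasible point just constructed has finite objective $1$), we may restrict the LP to the finitely many feasible configurations $r\in\overline{\Omega}$ with $\operatorname{radius}(r)\le\varepsilon$. On this restricted index set the objective $\sum_r c(r)\gamma(r)$ is bounded below by $0$ and the feasible region $\{\gamma\ge 0 : A\gamma = \mu^N\}$ is a (nonempty) polyhedron that contains no line, because $\gamma\ge 0$; hence by the fundamental theorem of linear programming the minimum is attained at a vertex $\gamma^\star$ of this polyhedron.

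Then I would translate "vertex" into a support bound. A point $\gamma^\star$ of $\{\gamma\ge 0 : A\gamma=\mu^N\}$ is a vertex if and only if the columns of $A$ indexed by $\operatorname{spt}\gamma^\star := \{r : \gamma^\star(r)>0\}$ are linearly independent. Since $A$ has only $N$ rows, any linearly independent set of its columns has cardinality at most $N$, giving $|\operatorname{spt}\gamma^\star|\le N$ as claimed. I would phrase this as a short self-contained argument: if the active columns were dependent there would be a nonzero vector $d$ supported on $\operatorname{spt}\gamma^\star$ with $Ad=0$, and then $\gamma^\star\pm t d$ would remain feasible for small $t>0$ while having objective value $c^Td$ of one sign or the other times $t$ — either contradicting optimality, or, if $c^Td=0$, letting us push $t$ until a new coordinate vanishes, which strictly reduces the support; iterating reaches a solution with linearly independent active columns without increasing cost.

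The only genuinely delicate point is making sure the reduction to a \emph{finite} LP is legitimate, i.e. that it is safe to discard all infinite-cost configurations before quoting the finite-dimensional theory; this is immediate here because the single-point configurations already furnish a finite-cost feasible point, so no optimal solution can place mass on an infinite-cost configuration. Everything else is the textbook equivalence between optimal basic feasible solutions and vertices, so I do not expect a real obstacle — the main care is just in stating the support-reduction argument cleanly enough that it also covers the degenerate case $c^Td=0$.
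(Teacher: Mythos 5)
Your proof is correct and follows essentially the same route as the paper: the paper also invokes the standard fact that an LP in standard form admits an optimal basic solution, whose support size is bounded by the rank of the constraint matrix and hence by the number of rows $N$. Your additional care about feasibility via singleton configurations and about discarding infinite-cost columns only fleshes out details the paper leaves implicit.
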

The support size is independent of the number of classes $K$! 
This fact follows from standard theory:
A linear program in standard form admits a basic solution. The number of variables in the basis is, at most, the rank of the constraint matrix, which is, in that case, less or equal to the number of rows ($N$).

In a similar problem for multi-marginal optimal transport, this observation led to the development of a new algorithm called \emph{Genetic Column Generation} (GenCol) \cite{friesecke2022genetic,friesecke2023gencol}.

The next step is to exclude all configurations with cost $+\infty$.

%\textbf{Remark:} Given a data set, there are two bounds for the classical adversarial risk \eqref{eq:probadvrisk}: The smallest distance between two points of different classes is the smallest adversarial budget for which the adversarial risk is bigger than zero. There is also an upper bound for the adversarial risk given by the largest class $y \in \Yc$ because each data point has to occur in at least one configuration, and no two points of the same class are allowed in each configuration.\\

\section{Search for configurations}
The first idea of column generation is to start from a subset $\Omega \subset \overline{\Omega}$ feasible to solve the \emph{reduced linear program}
\begin{equation} \tag{RP} \label{eq:RP}
\begin{split}
    \operatorname*{minimize}_{\gamma:\Omega \to [0,1]}\quad &\sum_{r \in \Omega}  c(r)\gamma(r) \\
    \text{subject to } &\sum_{r \in \Omega} \mathds{1}_{\{i \in r\}} \gamma(r) = 1/N \text{ for all } i = 1,...,N.
\end{split}
\end{equation}
Feasibility here means that the set of solutions of the restricted LP is non-empty, i.e. there exists a solution to the problem.
Next, one repeatedly adds new configurations to $\Omega$ and resolves the reduced LP.

By adding variables while not adding any constraints, the optimal cost of each reduced problem is monotonically decreasing. Each optimal solution of a reduced problem is an admissible point for the next, enlarged, reduced problem and also for the full problem.

For the classical problem \eqref{eq:probadvrisk}, one makes two observations:
\begin{lma} \label{lma:configs}
    Let $\varepsilon > 0$ and
    \[c(r) := \begin{cases*}
        1 & if there exists $x \in \Xc, \delta \leq \varepsilon$ such that $r \subset B_\delta(x)$ \\
        +\infty & else.
    \end{cases*} \] 
    For the reduced problem \eqref{eq:RP} it holds true that:
\begin{enumerate}
    \item Each singleton configuration $r$ has cost $c(r) = 1$
    \item A configuration $\{r_1,..,r_m\}$ can only have finite cost if each configuration with any of its $r_i$ left out has finite cost.
\end{enumerate}
\end{lma}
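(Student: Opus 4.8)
The plan is to verify both claims directly from the definition of the cost function $c(r)$ in terms of smallest enclosing balls, using only the elementary geometry of balls in a metric space.

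For the first claim, I would observe that a singleton configuration is $r = \{r_1\}$, so the associated point set is $\{x_{r_1}\}$. Then for \emph{any} $\delta > 0$ we may simply take $x = x_{r_1}$, and trivially $\{x_{r_1}\} \subset B_\delta(x_{r_1})$ since $d(x_{r_1}, x_{r_1}) = 0 < \delta$. Hence $\operatorname{radius}(r) = 0 \leq \varepsilon$ for any $\varepsilon > 0$, which by definition gives $c(r) = 1$. (One should also note a singleton is always feasible, since the condition of pairwise distinct classes is vacuous.)

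For the second claim, I would argue by monotonicity of the smallest enclosing ball under inclusion of point sets. Suppose $r = \{r_1, \dots, r_m\}$ has finite cost, i.e.\ there exist $x \in \Xc$ and $\delta \leq \varepsilon$ with $\{x_{r_1}, \dots, x_{r_m}\} \subset B_\delta(x)$. Fix an index $j$ and let $r' = r \setminus \{r_j\}$. Then $\{x_{r_1}, \dots, x_{r_m}\} \setminus \{x_{r_j}\} \subseteq \{x_{r_1}, \dots, x_{r_m}\} \subset B_\delta(x)$, so the \emph{same} center $x$ and radius $\delta \leq \varepsilon$ witness that $r'$ has finite cost; equivalently $\operatorname{radius}(r') \leq \operatorname{radius}(r) \leq \varepsilon$. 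Since $c$ only takes the values $1$ and $+\infty$, this shows that if any sub-configuration obtained by deleting one element had infinite cost, then $r$ itself would have infinite cost — which is the contrapositive of what is claimed. One small bookkeeping point: deleting an element from a feasible configuration preserves the pairwise-distinct-classes property, so $r'$ is again a feasible configuration and the statement is not vacuous. By induction one then gets the stronger-sounding but equivalent fact that \emph{every} subset of a finite-cost configuration has finite cost.

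There is essentially no hard part here; the lemma is a direct unpacking of definitions, and the only thing to be careful about is making the set-inclusion step explicit and noting that feasibility (distinct classes) is inherited by sub-configurations so that the reduced problem \eqref{eq:RP} actually contains these smaller configurations as legitimate columns. If one wanted to phrase the argument more structurally, the key abstract fact is that $r \mapsto \operatorname{radius}(r)$ is monotone with respect to inclusion of the underlying point multisets, and $c = 1 + \infty \cdot \mathds{1}_{\{\operatorname{radius} > \varepsilon\}}$ is a monotone function of the radius; the two statements of the lemma are then immediate corollaries, the first being the base case $\operatorname{radius} = 0$ and the second the monotonicity step.
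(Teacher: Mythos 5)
Your proof is correct and follows essentially the same route as the paper: the singleton case is handled by centering a ball of radius $0$ at the point itself, and the second claim follows because any subset of the configuration fits inside the same enclosing ball, so the smallest-enclosing-ball radius is monotone under inclusion. Your additional remarks on feasibility being inherited by sub-configurations are accurate but not needed beyond what the paper's short argument already establishes.
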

\begin{proof}
    1. The ball with radius 0 centered at $x_r$ is just the point. 2. A configuration $r = \{r_1,..,r_m\}$ has finite cost if and only if it fits in a ball with radius $\delta \leq \varepsilon$. But each subset of $r$ fits in the same ball, and therefore the radius of its smallest enclosing ball is less or equal to $\delta$.
\end{proof}

That gives rise to starting the search with all singleton configurations, which is trivially a feasible solution with possibly non-optimal but finite cost. This corresponds to the case that no data point is attacked and $\mu = \tilde \mu$.

By the relation of \eqref{eq:relaxedform} and \eqref{eq:MMOT-like}, the 1 minus the optimal cost of the reduced problem is always a lower bound for the minimal adversarial risk.

Nevertheless, we will briefly start by describing an efficient exhaustive search procedure applicable to small budgets.
\subsection{Search rule 1: Exhaustive search}
Starting from all singleton configurations, iteratively try adding points from foreign classes to a configuration. By Lemma \ref{lma:configs} (ii), one will find all configurations in the full set $\Bar{\Omega}$ with finite cost.

\begin{algorithm}
\caption{Exhaustive search, classical problem \eqref{eq:probadvrisk}}
\label{alg:exhaust}
\begin{algorithmic}[1]
\State $\Omega_1 = \{\{i\} \colon i \in N\}$
\For{$ k = 2,\dots,K$}
    \State $\Omega_k = \emptyset$
    \For{each $r = \{r_1,...,r_{k-1}\} \in \Omega_{k-1}$}
        \For{i = 1,...,N}
            \If{$y_i \notin \{y_{r_1},\dots,y_{r_{k-1}}\}$ and $\operatorname{rad}(\{r_1,\dots,r_{k-1},r_i\}) \leq \varepsilon$}
                    \State Add $\{r_1,\dots,r_{k-1},r_i\}$ to $\Omega_k$
                
            \EndIf
        \EndFor
    \EndFor
\EndFor
\State\Return $\Omega = \bigcup_k \Omega_k$
\end{algorithmic}
\end{algorithm}

For small data sets or budgets so small that there are only a few configurations with finite cost, that is a valid strategy, and an efficient implementation yields the desired result. An efficient implementation includes not trying any configuration twice, splitting the feasible set of configurations of length $k$ in batches, and searching configurations of length $k+1$ in parallel.

However, the number of feasible configurations can scale as bad as $\sum_{A\in S_K} \prod_{i \in A} n_i$, yielding too many configurations to efficiently solve the resulting linear program.

\subsection{Search rule 2: Genetic search}
The following is motivated by the GenCol Algorithm for MMOT problems \cite{friesecke2022genetic,friesecke2023gencol}.
Again, try generating new configurations for $\Omega$ starting from all singleton configurations.
After a fixed amount of configurations were tried, resolve the reduced problem \eqref{eq:RP} with the extended set $\Omega$ to find an extremal solution $\gamma^\star$ with $|\operatorname{spt}\gamma^\star| \leq N$.
Use the current optimal solution of $\eqref{eq:RP}$ to generate new configurations using a genetic search rule and repeat the procedure until no new configurations are found.
The core idea is to consider only \emph{active} configurations, i.e. configurations in the support of $\gamma^\star$, as parents. Due to the fixed number of constraints in the linear program, the size of the support of $\gamma^\star$ is bounded by $N$, limiting the complexity in each search step.
We summarize that in Algorithm \ref{alg:gensearch}.

\begin{algorithm}[!ht]
\caption{Genetic search, classical problem \eqref{eq:probadvrisk}}
\label{alg:gensearch}
\begin{algorithmic}[1]
\State $\Omega = \{\{i\} \colon i \in N\}$
\While{new configurations are found}
    \State Solve \eqref{eq:RP}
    \State $\gamma^\star \leftarrow$ primal solution to \eqref{eq:RP}
    \For{$s$ samples}
        \State Draw parent $r \in \operatorname{spt}(\gamma^\star)$
        \State Generate offspring from the parent
        \If{$\operatorname{radius}(\text{offspring}) \leq \varepsilon$}
            \State Add offspring: $\Omega \leftarrow \Omega \cup \textrm{Offspring}$
        \EndIf
    \EndFor
\EndWhile
\State Solve \eqref{eq:RP}
\end{algorithmic}
\end{algorithm}

To generate offspring, we consider 3 proposal rules:

Rule 1: The first rule is in the philosophy of the exhaustive search; starting from singleton configurations, we add a data point from a foreign class to a parent. If the cost is finite, the offspring are proposed.

Rule 2: The second rule allows points to switch configurations. We pick a parent configuration and exchange one of its entries with a new point from the data set under the restriction that the new point is from a foreign class.

Rule 3: The third rule we considered is the ability of points in a configuration to die. Those offspring always have finite cost by Lemma \ref{lma:configs} and are therefore always accepted.

Note that in an implementation of those routines, one has to check if the configurations are already contained in the set of configurations $\Omega$, indicated in the pseudocode by the union "$\cup$".

By increasing the number of variables while not changing any constraint, the sequence of objective values of the reduced problem is monotonically decreasing.
By the finite number of variables in the full problem, this sequence will eventually converge to a stationary point.
The drawback is that it might not find the global optimizer for the problem. In fact, one expects this routine to have a fast initial decay, as at the beginning, any feasible configuration yields a gain. But it slows down as it gets rare to find improving configurations. The convergence speed observed in numerical experiments is demonstrated in section \ref{sec:experiments}.

The main difference to the exhaustive search is that instead of searching for \emph{all} feasible configurations, the routine can quickly advance to long configurations and then improve from exchanging points.

At this point, all feasible configurations with finite cost are always added. In column generation, in contrast, a critic based on the dual optimal solution decides whether a new configuration is added.

\subsubsection*{Remarks:}
1. All search rules have in common that calculating the smallest enclosing ball must be fast. The calculation is quite easy for the metric induced by the $L^\infty$-norm. One simply evaluates the maximal distance in each coordinate of all features $x_{r_1},\dots,x_{r_k}$.\\[\baselineskip]
\noindent 2. For the metric induced by the $L^2$-norm, calculating the radius of the smallest enclosing ball is a delicate problem. Providentially, there exist theory and efficient implementations of those algorithms \cite{fischer2003smallest}.\\[\baselineskip]
\noindent 3. It might be interesting to hybridize both the genetic and the exhaustive search.

\section{Adversarial attacks with a $W_2$ penalty}
Following ideas from \cite{trillos2023multimarginal}, instead of penalizing the distributional adversarial attack via $W_{\infty}(\tilde \mu ,\mu) < \varepsilon$, one can think of a penalization in the relaxed formulation \eqref{eq:relaxedform} with respect to the $W_2$ distance
\begin{equation} \label{eq:w2penalized}
   \adjustlimits \inf_{f \in \mathcal{F}}\sup_{\tilde \mu} \left\{{R(f,\tilde \mu) - \frac{1}{\tau^2} \sum_{i=1}^K W^2_2(\tilde\mu_i,\mu_i)}\right\}.
\end{equation}
The regularization parameter $\tau$ controls the strength of the adversarial attack, similar to the classical adversarial budget $\varepsilon$. That means a larger value for $\tau$ plays the role of a larger budget and results in a weaker regularization strength. Note that $R(f, \tilde \mu)$ is bounded -- from below by zero and from above by one -- and we do not lose any generality by restricting $\tilde \mu$ to the ball induced by $W_2(\tilde \mu_i,\mu_i) \leq \tau$. That ensures the inner supremum is always attained, as it follows from the following Lemma.

\begin{lma}
    Let $(\Xc,||\cdot||)$ be a separable Banach space. Then the $W_2$-ball of radius $\tau$ around $\mu$
    \[\{\tilde \mu \in \mathcal{P}_2(\Xc) \colon W_2(\mu,\tilde \mu) \leq \tau \}\]
    is tight in the space of probability measures with finite second moment $\mathcal{P}_2(\Xc)$.
\end{lma}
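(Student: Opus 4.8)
The plan is to run a Prokhorov‑type argument: the reference measure $\mu$ is a fixed probability measure on a Polish space, hence tight, and I want to transfer this tightness to every $\tilde\mu$ in the ball by pushing a concentration estimate through an optimal transport plan. Concretely, since a separable Banach space is Polish, Ulam's theorem gives that $\mu$ is inner regular by compact sets; so for a prescribed $\eta>0$ I fix a compact $K=K_\eta\subset\Xc$ with $\mu(\Xc\setminus K)<\eta/2$.

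Let $\tilde\mu$ be any measure with $W_2(\mu,\tilde\mu)\le\tau$. Because $(x,y)\mapsto\|x-y\|^2$ is lower semicontinuous, an optimal plan $\gamma\in\Pi(\mu,\tilde\mu)$ exists with $\int_{\Xc\times\Xc}\|x-y\|^2\,d\gamma=W_2^2(\mu,\tilde\mu)\le\tau^2$. Markov's inequality then gives, for every $R>0$, that $\gamma(\{(x,y):\|x-y\|\ge R\})\le\tau^2/R^2$, and --- crucially --- this bound is uniform over all $\tilde\mu$ in the ball, since the right‑hand side no longer depends on $\tilde\mu$. Picking $R_\eta:=\tau\sqrt{2/\eta}$ makes it at most $\eta/2$.

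Now set $C_\eta:=\{z\in\Xc:\inf_{x\in K}\|z-x\|\le R_\eta\}$, the closed $R_\eta$‑neighbourhood of $K$. If $x\in K$ and $\|x-y\|\le R_\eta$ then $y$ lies in $C_\eta$, so, using that $\tilde\mu$ is the second marginal of $\gamma$,
\[\tilde\mu(\Xc\setminus C_\eta)\le\gamma(\{(x,y):x\notin K\})+\gamma(\{(x,y):\|x-y\|>R_\eta\})\le\tfrac{\eta}{2}+\tfrac{\eta}{2}=\eta,\]
and this holds simultaneously for all $\tilde\mu$ in the $W_2$‑ball. Thus, modulo checking that $C_\eta$ is compact, this is precisely the claimed tightness.

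That last check is the delicate point, and the one I expect to be the main obstacle. For the feature spaces that actually occur here --- $\Xc=\R^d$, or any finite‑dimensional normed space --- the set $C_\eta$ is closed and bounded, hence compact, and the proof is finished; all the other steps are routine bookkeeping around the Markov estimate through the optimal plan. In a genuinely infinite‑dimensional Banach space, however, the closed $R_\eta$‑neighbourhood of a compact set need not be compact (it contains translates of the non‑compact closed ball $\overline{B}_{R_\eta}(0)$), so the statement should be read with $\Xc$ finite‑dimensional in mind: indeed, in $\ell^2$ the $W_2$‑ball of radius $\tau$ around $\delta_0$ contains all the Diracs $\delta_{\tau e_n}$, which form a non‑tight family. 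Hence the finite‑dimensionality of the feature space enters exactly at the compactness of the enlarged set $C_\eta$, and nowhere else.
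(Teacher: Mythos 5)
Your argument is correct, and it takes a genuinely different route from the paper's. The paper's proof uses the reverse triangle inequality against $\delta_0$ to get a uniform bound on the second moments of all $\tilde\mu$ in the ball and then concludes tightness ``by Markov's inequality''; you instead transfer the tightness of the single measure $\mu$ through an optimal plan: a compact $K$ with $\mu(\Xc\setminus K)<\eta/2$ together with the Markov bound $\gamma(\{\|x-y\|>R_\eta\})\le\tau^2/R_\eta^2$ for the optimal coupling gives $\tilde\mu(\Xc\setminus C_\eta)\le\eta$ uniformly over the ball, with $C_\eta$ the closed $R_\eta$-neighbourhood of $K$. What your version buys is that it isolates exactly where compactness enters, and your closing observation is correct and in fact exposes a gap in the paper's own argument: in an infinite-dimensional separable Banach space neither the paper's concentration sets (closed balls $\{\|x\|\le R\}$, which is all that uniformly bounded second moments plus Markov deliver) nor your sets $C_\eta$ are compact, and your family $\{\delta_{\tau e_n}\}_n\subset\mathcal{P}_2(\ell^2)$, which lies in the closed $W_2$-ball of radius $\tau$ around $\delta_0$ and is not tight, shows the lemma is false as stated for a general separable Banach space. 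So both proofs are really proofs for the case in which closed bounded subsets of $\Xc$ are compact (finite-dimensional normed spaces, or more generally proper metric spaces), which covers all feature spaces used in the paper ($\R^2$ and $\R^{3\cdot 32^2}$); under that reading your proof is complete, and it has the extra merit of giving uniform concentration on a neighbourhood of a fixed compact set directly, without passing through moments.
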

% Easily extendable to Polish spaces.
\begin{proof}
    The Wasserstein distance is a metric on $\mathcal{P}_2(\Xc)$. Hence, the reverse triangle inequality holds true, and for any measure $\tilde \mu$ in the $W_2$-ball,
    \[\tau \geq W_2(\mu,\tilde \mu) \geq |W_2(\mu,\delta_0) - W_2(\tilde \mu,\delta_0)| = \left| \sqrt{\int_{\Xc} ||x||^2\ d\mu(x)} - \sqrt{\int_{\Xc} ||x||^2\ d\tilde\mu(x)} \right|.\]
    The second moment of $\mu$ is finite, and hence, all the second moments of elements of the set are uniformly bounded. 
    That implies tightness by Markov's inequality. %e.g. Cohn, Measure Theory, 2013, Proposition 2.3.10
\end{proof}

In this setting, the cost function $c_A$ in the problem \eqref{eq:MMOT-like} becomes
\begin{equation*}
    c_{A}((x_1,\dots,x_m)) = \frac{1}{\tau^2} \inf_{x \in \Xc} \sum_{i=1}^m d(x_i,x)^2 = \frac{1}{\tau^2} \sum_{i=1}^m d(x_i,\overline{x})^2,
\end{equation*}
where $\Bar{x}$ is the Fr\'echet mean on $(\Xc,d)$; in the Euclidean space $\Bar{x} = \frac{1}{m}\sum_{i=1}^m x_i$.

The classical adversarial risk \eqref{eq:probadvrisk} is for historical reasons coming from the data perspective and allowing small perturbations of the data points in the metric of the space $\Xc$, leading to the bound on the deviation in $W_\infty$.
In optimal transport, the $W_2$ distance proved to be an excellent measure of deviations of probability distributions and is used, for example, for Wasserstein barycenter \cite{agueh2011barycenters}. 
The statistical relevance of the 2-Wasserstein distance as a measure of deviation in the space of probability measure is also well understood \cite{panaretos2019statistical}, motivating it as a reasonable alternative to the classical one.
The idea to consider a different metric in the space of probability measures to define the ambiguous set for the attacker is in line with many works in the field of distributional robust optimization where more general problems are considered (see, e.g., \cite{esfahani2018data} and references therein).

The $W_\infty$-ball puts an upper bound on any essential mass dislocation but does not account for the amount of mass moved below that distance. In contrast, the $W_2^2$ penalty term penalizes any mass dislocation scaling quadratic with the distance in the underlying space $\Xc$.

The second motivation for the $W_2^2$ penalty is a purely algorithmic consideration, as explained in the next subsection.

\subsection{Algorithm}
All feasible configurations now have a finite cost but depend on their deviation from their mean. 
In the linear program \eqref{eq:LP} the cost coefficients become
\[c(r) = 1 + \frac{1}{\tau^2} \sum_{i=1}^m d(x_{r_i},\Bar{x})^2.\]

An admissible solution is still given by all singleton configurations; \eqref{eq:RP} with $\Omega = \{\{1\},\dots,\{N\}\}$, but we now cannot exclude a configuration based on whether the cost coefficient is finite or not. 
First, that is an obstacle for exhaustive search. We need the critic as in classical column generation for that decision. 

The corresponding dual program for \eqref{eq:RP} is
\begin{align*}
    \operatorname*{maximize}_{u:\mathbb{R}^N \to \mathbb{R}}\quad &\sum_{i = 1}^N  \frac{u_i}{N} \\
    \text{subject to } &\sum_{i=1}^m u_{r_i} \leq c(r) \text{ for all } r \in \Omega.
\end{align*}
Note that the reduced set of configurations $\Omega$ corresponds to the constraints of the dual problem. The idea of column generation is now that a candidate configuration $r' \notin \Omega$ yields a gain if it violates the constraints given the current optimal dual solution $u^\star$:
\[\sum_{i=1}^{m'} u_{r_i} > c(r').\]
Only those candidates are then added to $\Omega$.

However, with this rule, the number of configurations still increases, and the routine does not exploit the guaranteed sparsity of the optimizers discussed at the beginning.
The second idea from GenCol now is that unused configurations, i.e., $r \in \Omega$ such that $\gamma(r) = 0$, are removed. 
Therefore, we introduce a parameter $\beta$ to limit the number of configurations in $\Omega$. Since the number of active configurations ($\gamma(r) > 0$) of an extremal solution of \eqref{eq:RP} is bounded by the number of constraints ($N$), whenever the number of configurations in $\Omega$ exceeds $\beta\cdot N$ we remove a batch of unused configurations from $\Omega$.
In the following simulations, $\beta$ is a small integer (chosen to be 3 in the numerical simulations in Section \ref{sec:experiments}), and the number of removed configurations is simply $N$.
The routine is summarized in Algorithm \ref{alg:w2algo}. 

\begin{algorithm}[!h]
\caption{Genetic Column Generation for OT-regularized problems}
\label{alg:w2algo}
\begin{algorithmic}[1]
\Require $\Omega = \{\{i\} \colon i \in N\}$
\While{new configurations are found}
    \State Solve \eqref{eq:RP}
    \State $(\gamma^\star, u^\star) \leftarrow$ primal and dual solution to \eqref{eq:RP}
    \If{$|\Omega| > \beta\cdot N$}
        \State remove $N$ inactive configurations from $\Omega$
    \EndIf
    \For{$s$ samples}
        \State Draw parent $r \in \operatorname{spt}(\gamma^\star)$
        \State Generate offspring $r'$ from the parent
        \State gain := $\sum_{i=1}^{m'} u_{r_i} - c(r')$
        \If{gain $> 0$}
            \State Add offspring to $\Omega$
        \EndIf
    \EndFor
\EndWhile
\State\Return solution of \eqref{eq:RP}
\end{algorithmic}
\end{algorithm}

\textbf{Remark:} The penalty term $W_2^2(\tilde \mu, \mu)$ is strictly positive whenever $\tilde \mu \neq \mu$. That implies that the optimal value of \eqref{eq:MMOT-like} is the optimal value of the regularized problem $R - C$. To compute the risk $R$ of the adversarial attack $\tilde \mu$, one has to correct the value by $C$, or, equivalently, replace the cost coefficients of the optimizer $\gamma$ in the linear program formulation \eqref{eq:RP} by 1.

\section{Experiments}
\label{sec:experiments}
In this section, we compare the three proposed strategies. For explanatory purposes, we start with a synthetic data set in $\R^2$ before considering real-world image data. 

The exhaustive search (Algorithm \ref{alg:exhaust}) can directly be compared to the genetic search (Algorithm \ref{alg:gensearch}). The $W_2$ regularization has a different effect and must be considered separately. Therefore, there are two things we want to find out:
\begin{enumerate}
    \item Is the genetic search rule (Algorithm \ref{alg:gensearch}) able to find an optimal set of configurations compared to the exhaustive search (Algorithm \ref{alg:exhaust})?
    \item How does the $W_2$-regularized problem behave in terms of convergence of Algorithm \ref{alg:w2algo} and regularization strength $\tau$?
\end{enumerate}

\subsection{Data}
\textbf{Synthetic data.} We consider ten two-dimensional normal distributions with slightly shifted centers. The number of sampled data points is $N=1000$. The classes and centers were drawn at random, resulting in slightly different sizes for each class. The largest class contains 119 points, and the smallest is 81. The data set is visualized in Figure \ref{fig:TwoDim-data}.
\begin{figure}
    \centering
    \includegraphics[width=0.45\textwidth]{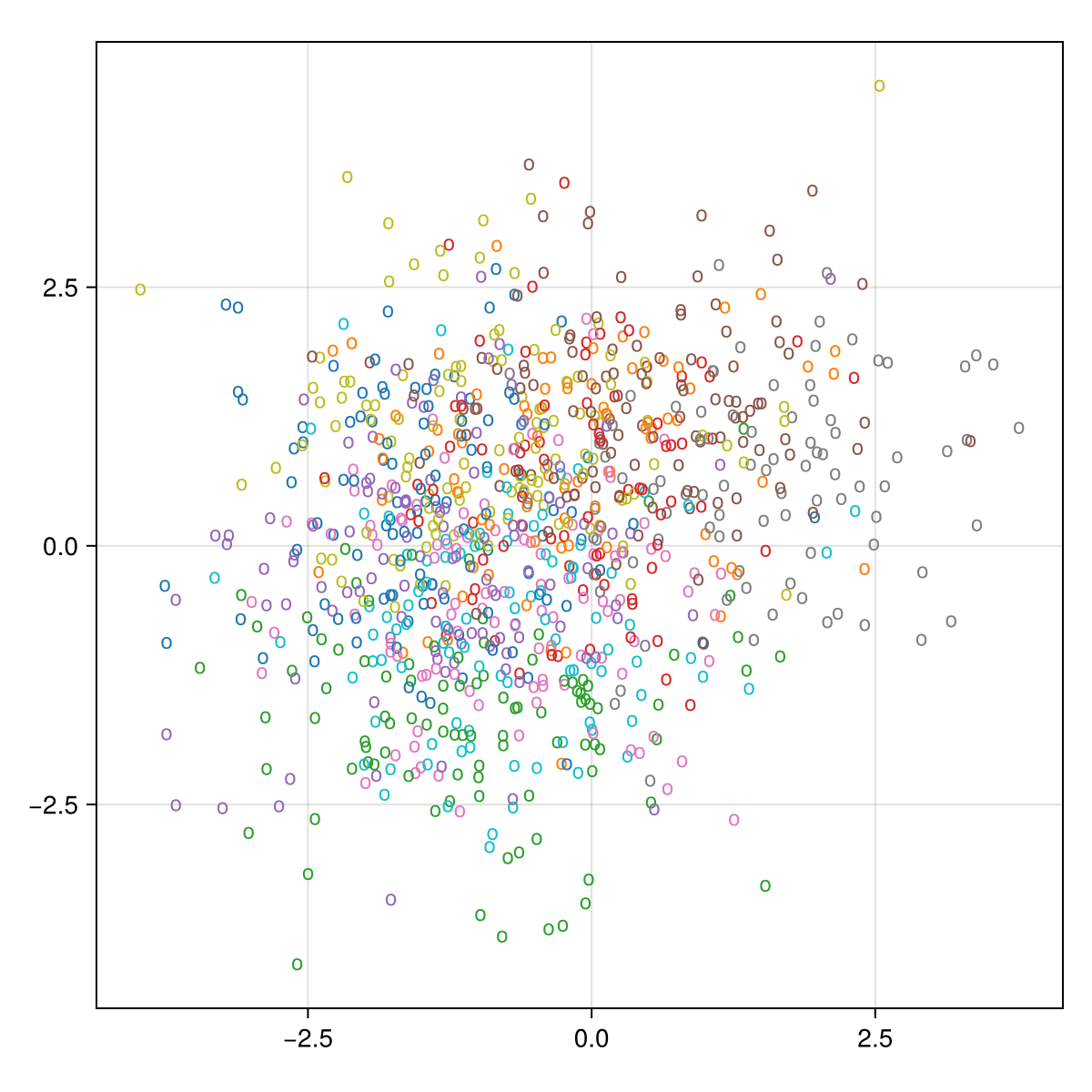}
    \caption{A synthetic data set of 10 overlapping Gaussian distributions. The proximity of the clusters limits the classification power and makes it a hard classification problem.}
    \label{fig:TwoDim-data}
\end{figure}

On purpose, we have a huge overlap of classes, limiting the classification power of any classifier on the underlying distributions and making the classification problem harder. Note that a 1-nearest neighbor classifier has risk 0 on the data since no two data points are identical.\\[1em]
\noindent\textbf{CIFAR-100} is a well-known benchmark data set \cite{krizhevsky2009learning}, publicly available in the internet. It consists of 60000 tiny images of resolution $32\times 32$ pixels with 3 channels (RGB) in 100 classes.

In contrast to the MNIST data set of handwritten images, the larger number of classes and the smaller number of images per class make it much harder to compute the adversarial risk since the number of possible configurations scales exponentially with the number of classes.

The quantity of interest should be compared to the risk of a classifier on the test set. Hence, we used the test split ($N=10000$) for the analysis. The computation time for the cost coefficients, being the radius of the smallest enclosing ball, takes significantly longer for data points in $\R^{3\cdot32^2}$ instead of $\R^2$. For the simulations below, the data set was restricted to the first 30 classes, resulting in $N=3000$ data points to explore a larger range of budgets. 

\subsection{Simulations}
The code for the simulations was written in Julia. The CIFAR-100 data set was downloaded from the official website using the Julia package \href{https://juliaml.github.io/MLDatasets.jl/v0.7.14/}{MLDataSets.jl}. The smallest enclosing ball for the Euclidean metric was computed using the Julia package \href{https://github.com/JuliaFEM/BoundingSphere.jl}{BoundingSphere.jl}.
For solving the linear programs, we used the \href{https://highs.dev/}{HiGHS} optimizer via the Julia package \href{https://jump.dev/JuMP.jl/v1.20.0/}{JuMP.jl}. This allowed an efficient framework to modify and resolve the reduced problems. An additional benefit is that the solver can be exchanged easily: For the large problems resulting from the exhaustive search, we used Mosek \cite{mosek}, yielding a significant speed up.

For better stability in the subsequent linear program solvers, the problems were rescaled in that the mass of each marginal point was set to 1 instead of $1/N$, resulting in a total mass of $\sum_{r\in \Omega} \gamma(r) = N$ instead of 1.

As a stopping criterion for the genetic search rules, a maximum time was chosen. For the genetic search for the classical problem in Algorithm \ref{alg:gensearch}, we also stopped when we reached the true optimizer found by an exhaustive search because the algorithms are strict descend algorithms, and no further improvement can occur.
For Algorithm \ref{alg:w2algo}, no optimizer to the full problem can easily be computed, and therefore, we only stopped after the maximum time.

\subsubsection{Synthetic data}
We start with the classical problem with $W_\infty$-regularization, the underlying distance being the Euclidean distance. For the small synthetic data set, we can explore a large range of budgets, namely budgets from $\varepsilon=0$ to 0.28, using the exhaustive search. 

A detailed breakdown by configuration length is illustrated\footnote{Inspired by \cite{trillos2024optimal}.} in Figure \ref{fig:TwoDim-nconf}. The number of configurations of length 1 is always 1000 since all singleton configurations have radius 0 and are, hence, always feasible. The time spent on the search was measured in 3 independent runs per budget using a parallelized code on a 4-core Intel i5 (2.00 GHz). This is presented in Table \ref{tab:TwoDim-nconf}. Note that for the largest budget ($\varepsilon = 0.28$), there are nearly 10 million configurations found.

Linear programs of that size start to become challenging for solvers like HiGHS. We therefore switched to the commercial LP solver from Mosek \cite{mosek} to reduce the solve time.

\begin{figure}[!ht]
    \centering
    \includegraphics[width=0.6\textwidth]{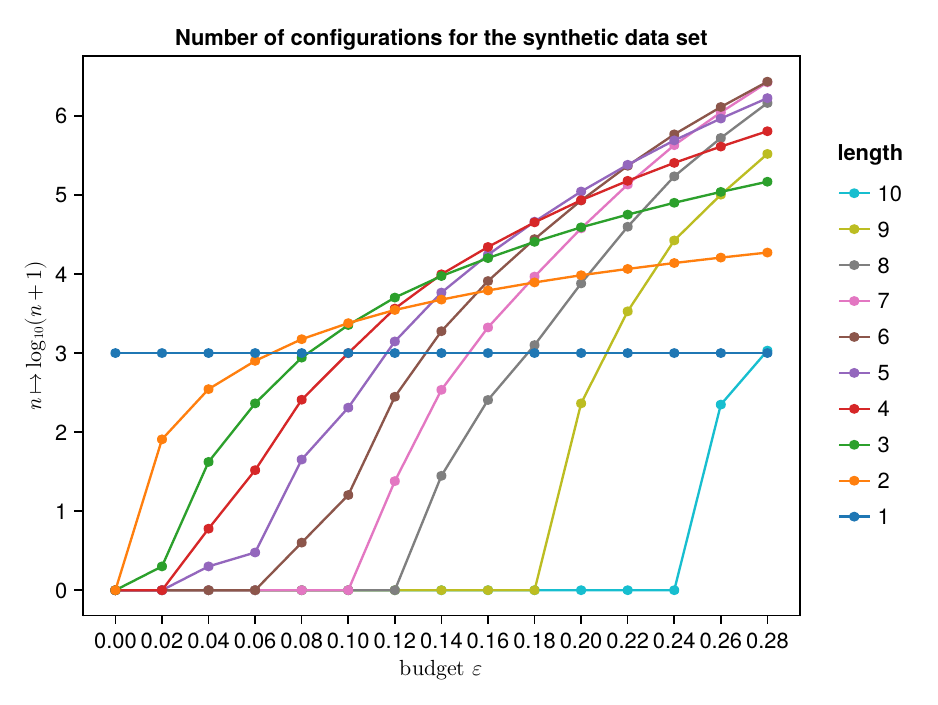}
    \caption{The figure shows the number of configurations per adversarial budget in log-scale. Each colored line indicates configurations of a certain length. The data set features 10 classes, limiting the maximal length of a configuration. The corresponding search times are reported in Table \ref{tab:TwoDim-nconf}.}
    \label{fig:TwoDim-nconf}
\end{figure}

Next, we test the genetic search, as described in Algorithm \ref{alg:gensearch}. We simply choose the weighting of the search rules to be 1:1:0 (i.e. points in a configuration never die). 
The stopping criterion of each routine was either reaching the global optimum as determined by the exhaustive search or running for at most 300 seconds.
The convergence plots for a selection of budgets are presented in Figure \ref{fig:TwoDim-conv}.
For all budgets, the genetic search rule finds a good approximation within a short time. 
\begin{figure}
    \centering
    \includegraphics[width=0.6\textwidth]{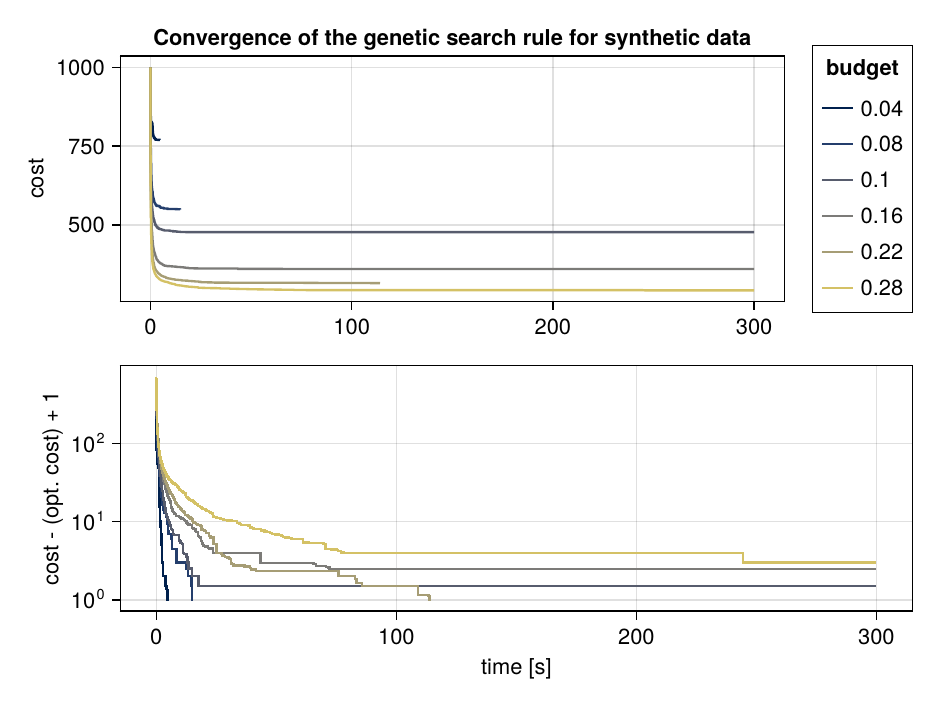}
    \caption{Convergence of the genetic search rule for the classical $W_\infty$-regularization. For small budgets, an optimal set of configurations is quickly found. The upper figure shows the optimal cost of the reduced problem in dependence on computation time; the lower figure shows the difference to the true optimal cost found by the exhaustive search in logarithmic scale. For larger budgets, the optimal cost often stagnates, but as seen for budget $\varepsilon = 0.22$ it is possible to find an optimal set of configurations.}
    \label{fig:TwoDim-conv}
\end{figure}

The resulting estimations for the adversarial risks are shown in Figure \ref{fig:TwoDim-exhvsgen}; the relative error is below 1\% for all budgets, indicating a good approximation by the genetic search rules. 

\begin{figure}
    \centering
    \includegraphics[width=0.6\textwidth]{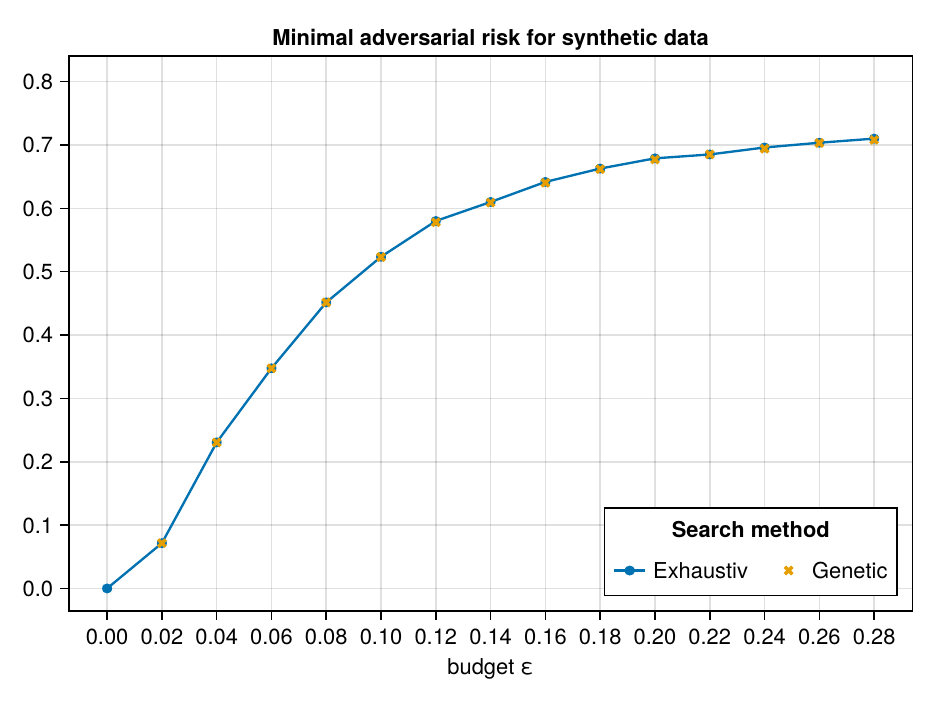}
    \caption{Minimal adversarial risk for the classification problem for the Euclidean metric. The x-axis indicates the adversarial budget $\varepsilon$, and the y-axis indicates the corresponding minimal adversarial risk. The relative error of the genetic search rule compared to the exhaustive search is below 1\%.}
    \label{fig:TwoDim-exhvsgen}
\end{figure}

The restriction of the search space to the active configurations first accelerates the search for configurations, as seen in the fast decay in the first seconds, independent of the budget.

For the $W_2$-regularized problem, we take advantage of the optimal dual solution and use column generation as described in Routine \ref{alg:w2algo}. New configurations are only added if they have a positive gain with respect to the current reduced problem. 

The convergence behavior is shown in Figure \ref{fig:TwoDim-w2} on the left. For small regularization strength $\tau$, the routine converges quickly; for larger $\tau$, we stopped the routine after 300 seconds. The maximal risk for a given $\tau$ might hence be underestimated. 

\begin{figure}
    \centering
    \includegraphics[width=0.45\textwidth]{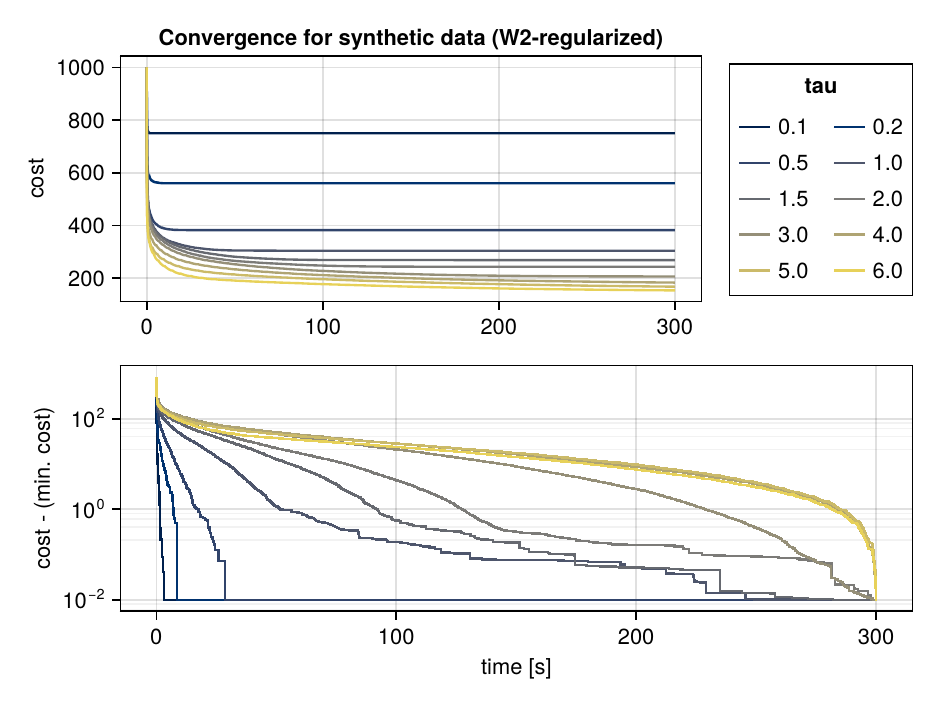}
    \includegraphics[width=0.45\textwidth]{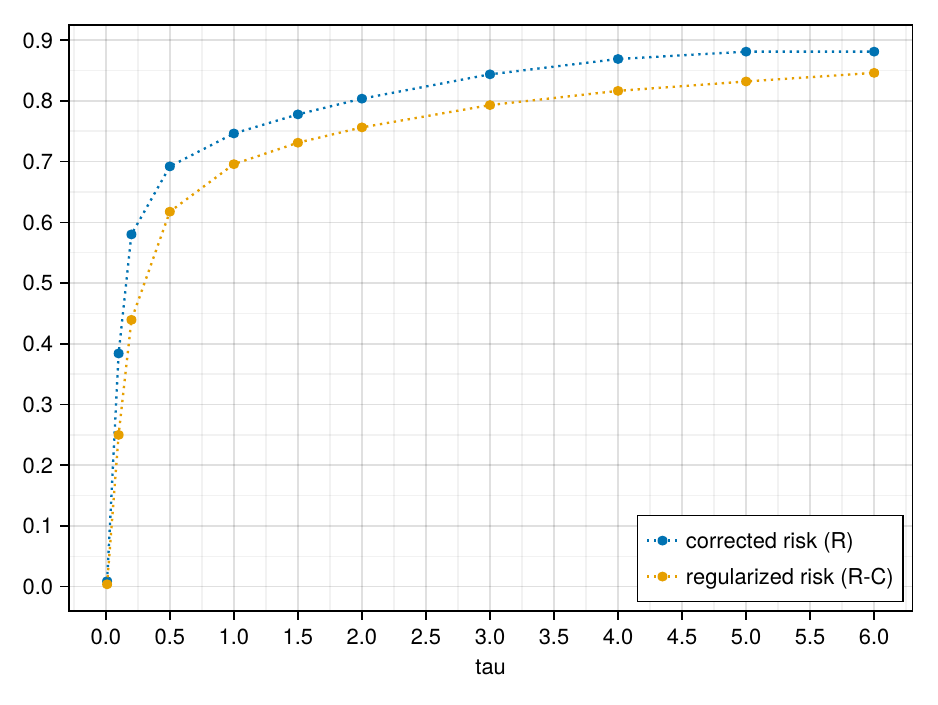}
    \caption{Left: Convergence of Algorithm \ref{alg:w2algo} for the synthetic data set. The convergence speed decreases with increasing $\tau$. The lower plot shows the relative optimal cost of the reduced problem on a logarithmic scale with base 10 to visualize the convergence speed. Right: The optimal values of Problem \ref{eq:w2penalized} for different regularization strengths $\tau \in [0,6]$. For $\tau \geq 5.0$ the corrected adversarial risk is the maximal adversarial risk ($1-\frac{119}{1000}$) for that data set.} 
    \label{fig:TwoDim-w2}
\end{figure}
For $W_2$ penalized problems, the optimal value of the regularized problem does not coincide with the one from the unregularized. The reason is that in contrast to $W_{\infty}$ penalty each deviation $\tilde \mu$ from $\mu$ has a positive cost $W_2^2(\tilde \mu,\mu)$.
Hence, in order to obtain the adversarial risk, we need to correct the optimal value by the value of the penalty term.
The corrected adversarial risks depending on the "budget" $\tau$ are presented in Figure \ref{fig:TwoDim-w2} on the right.

One can see that even if not fully converged, the corrected risk for $\tau \geq 5.0$ is optimal because it is the maximal adversarial risk for the data set. This is simply given by the fraction of the largest class on the size of the data set. That implies that even if additional configurations are found that increase the regularized objective $R - C$, the correction does not affect the estimation of the risk $R$.

We conclude that genetic column generation can be used to compute the minimal adversarial risk, if $W_2$ regularized, for any budget $\tau$ within a reasonable time for this data set.

\subsubsection{CIFAR data set}

Finally, we want to test the algorithms on real-world data. The number of configurations per length and budget found by the exhaustive search is shown in Figure \ref{fig:CIFAR30-exhvsgen} on the left. For an adversarial budget of 5.4 the longest configurations are of length 11. The minimal adversarial risk in dependence on the budget is again well approximated by the genetic search. 

\begin{figure}
    \centering
    \includegraphics[width=0.45\textwidth]{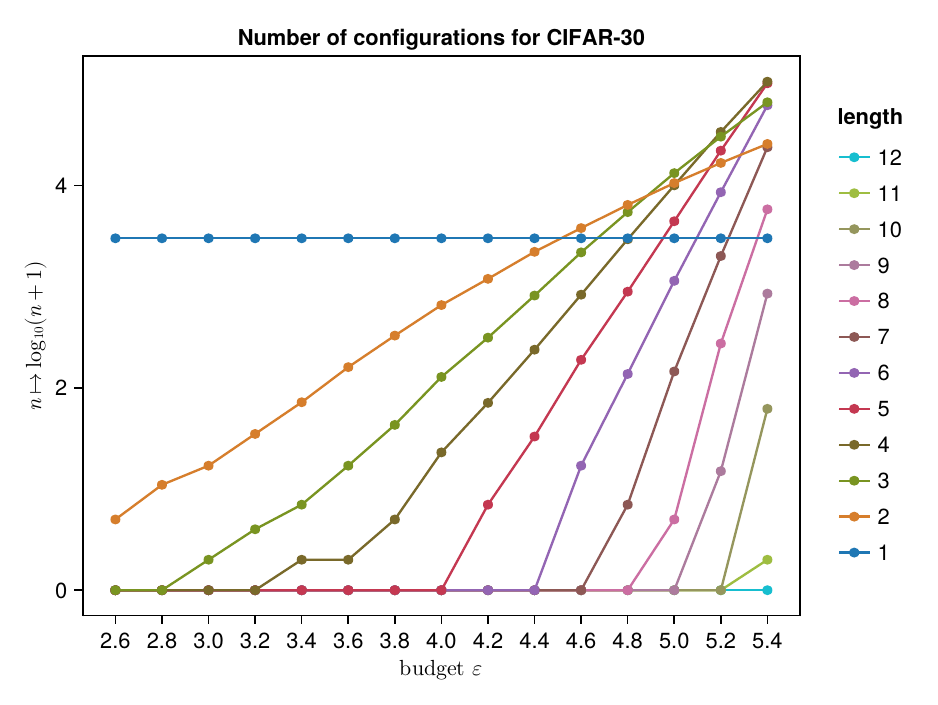}
    \includegraphics[width=0.45\textwidth]{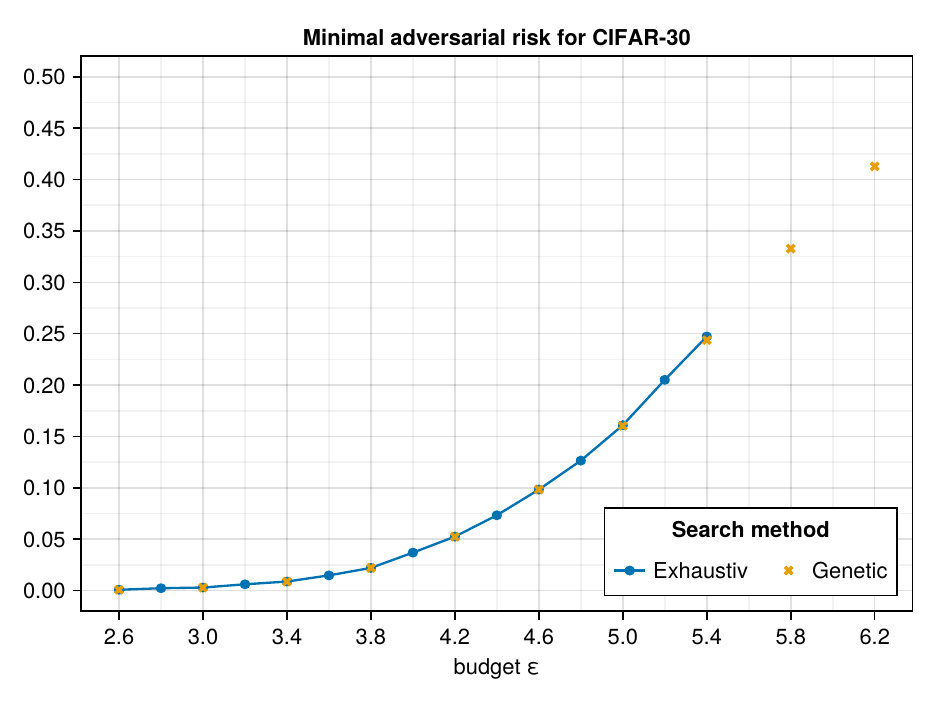}
    \caption{Left: Number of configurations with radius less than the budget (horizontal axis), split by configuration length. Right: Minimal adversarial risk for the classification problem for the Euclidean metric. The x-axis indicates the adversarial budget $\varepsilon$, and the y-axis indicates the corresponding minimal adversarial risk. The blue curve indicates the true results due to the exhaustive search. The yellow dots indicate the estimation due to the genetic search rules. The relative error of the genetic search rule compared to the exhaustive search is always below 1\%.}
    \label{fig:CIFAR30-exhvsgen}
\end{figure}

Even if the number of configurations is not bigger than in the first example, the search took more time because the computation of the cost coefficients -- which is the radius of the configuration --, took significantly more time due to the very high dimensional feature space.
The exhaustive search for the budget $\varepsilon = 5.4$ took about 45 minutes and wasn't carried out for larger budgets. 
For those budgets, the genetic search rule found a good approximation of the minimal adversarial risk, as seen in Figure \ref{fig:CIFAR30-exhvsgen} on the right.
For larger budgets, the genetic search rule can still be used to estimate a lower bound for the minimal adversarial risk, but the algorithm didn't converge out. That indicates that for problems of that size a purely generative genetic search rule is not sufficient.

In contrast, the $W_2$ regularized problem has the advantage that the Algorithm has a powerful critic to accept new configurations. That accelerates convergence significantly. For the $W_2$ regularized problem, the convergence for $\tau \in \{6,7,8,9,10\}$ is presented in Figure \ref{fig:CIFAR30-W2} on the left. 
But again, for even larger $\tau$, the convergence gets significantly slower. The right shows the estimation for the regularized and for the corrected adversarial risk for regularization strength $\tau$. The algorithm converged only for $\tau \leq 7$, implying that the minimal adversarial risk for $\tau > 7$ is underestimated.

\begin{figure}
    \centering
    \includegraphics[width = 0.45\textwidth]{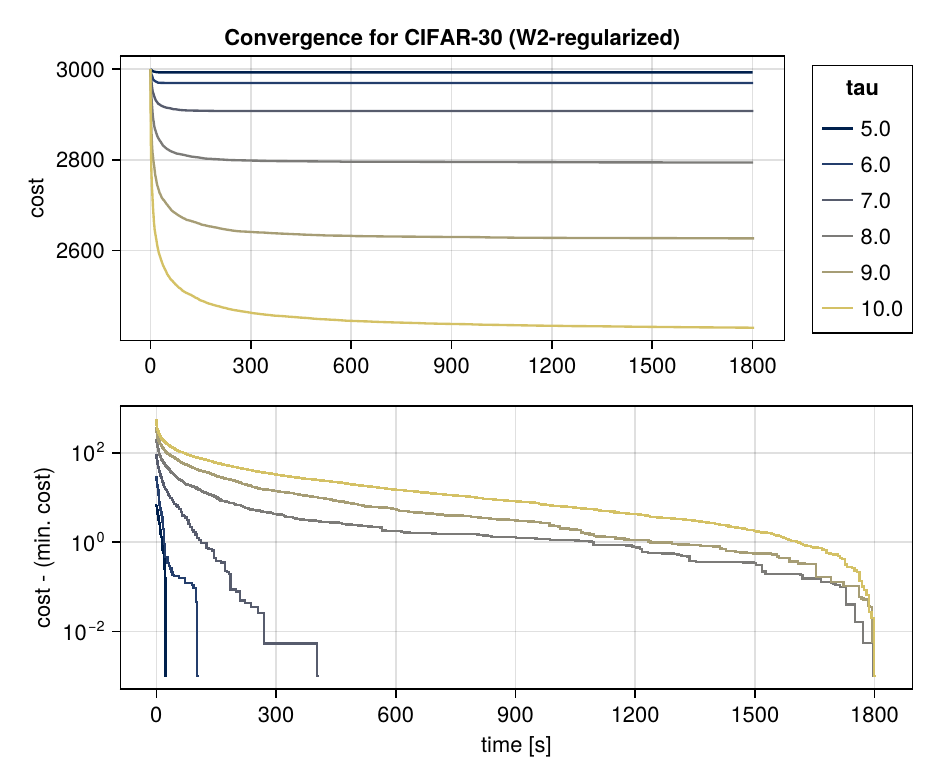}
    \includegraphics[width=0.45\textwidth]{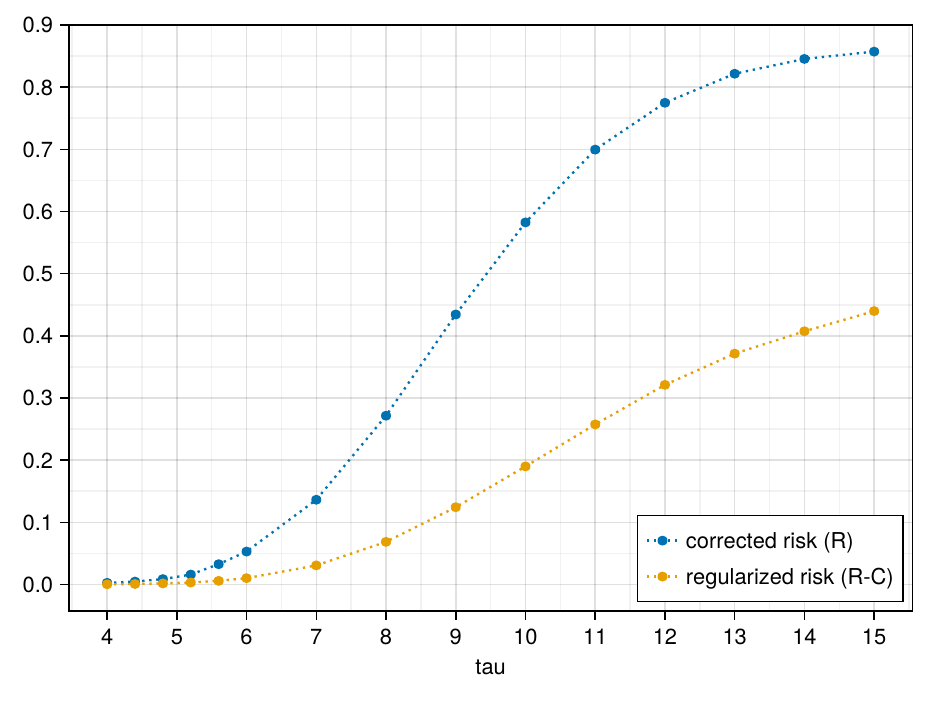}
    \caption{Left: Convergence of Algorithm \ref{alg:w2algo} for the CIFAR data. For small budgets ($\tau \leq 7$) the algorithm converged in less than 400 seconds. For budgets $\tau \leq 10$, the algorithm still asymptotically found a good approximation indicated by a low discrete gradient. Right: Lower bounds for the minimal adversarial risk. For large $\tau > 10$, the minimal adversarial risk is (still) underestimated.}
    \label{fig:CIFAR30-W2}
\end{figure}

\section{Conclusion}
We investigated how ideas from Genetic Column Generation can be used to find the minimal adversarial risk for multi-class classification problems, especially for data sets with many overlapping classes. We further explored the option to replace the classical adversarial budget with respect to a $W_\infty$ ball by a penalty on the $W_2$ deviation.

By restricting the set of configurations and solving the reduced problem, we ensured finding a lower bound for the minimal adversarial risk and -- for budgets not too big -- a very good approximation of it.
We saw that a genetic search rule alone can be used to quickly find an approximation from below but might not find the minimal adversarial risk. For small to moderate budgets, that approximation was still good, considering significantly fewer configurations.

By replacing the classical adversarial attack with a $W_2$ penalty, we were able to explore a slightly different problem. The accelerated convergence behavior by utilizing duality enabled us to explore a large range of penalty strengths up to regimes of much larger adversarial risk.

In both algorithms, the curse of dimension occurring in the number of configurations to be considered was efficiently tackled by considering an iterative sequence of reduced problems and updating the set of configurations in a genetic fashion. In the $W_2$-regularized problem, the restriction of the problem size did not harm the algorithm to efficiently find new configurations.

However, a few open question remain. First, the genetic scheme is quite flexible and many other proposal rules could be tried. Second, one might gain some computational advantages by parallelizing the search for new configurations and using larger computers. And finally, the optimal dual solution can be used to define a classifier. It would be interesting to compare it with existing classifiers.

\bibliographystyle{alpha}
\bibliography{lit} %Prints bibliography

\newcommand{\etalchar}[1]{$^{#1}$}
\begin{thebibliography}{MMS{\etalchar{+}}17}

\bibitem[AC11]{agueh2011barycenters}
Martial Agueh and Guillaume Carlier.
\newblock Barycenters in the wasserstein space.
\newblock {\em SIAM Journal on Mathematical Analysis}, 43(2):904--924, 2011.

\bibitem[ApS24]{mosek}
MOSEK ApS.
\newblock {\em MOSEK Optimizer API for Julia 10.1.31}, 2024.

\bibitem[BLZ{\etalchar{+}}21]{bai2021recent}
Tao Bai, Jinqi Luo, Jun Zhao, Bihan Wen, and Qian Wang.
\newblock Recent advances in adversarial training for adversarial robustness.
\newblock In Zhi-Hua Zhou, editor, {\em Proceedings of the Thirtieth
  International Joint Conference on Artificial Intelligence, {IJCAI-21}}, pages
  4312--4321. International Joint Conferences on Artificial Intelligence
  Organization, 8 2021.
\newblock Survey Track.

\bibitem[FG03]{fischer2003smallest}
Kaspar Fischer and Bernd Gartner.
\newblock The smallest enclosing ball of balls: combinatorial structure and
  algorithms.
\newblock In {\em Proceedings of the nineteenth annual symposium on
  Computational geometry}, pages 292--301, 2003.

\bibitem[FP23]{friesecke2023gencol}
Gero Friesecke and Maximilian Penka.
\newblock The gencol algorithm for high-dimensional optimal transport: general
  formulation and application to barycenters and wasserstein splines.
\newblock {\em SIAM Journal on Mathematics of Data Science}, 5(4):899--919,
  2023.

\bibitem[FSV22]{friesecke2022genetic}
Gero Friesecke, Andreas~S Schulz, and Daniela Vo\"{o}gler.
\newblock Genetic column generation: Fast computation of high-dimensional
  multimarginal optimal transport problems.
\newblock {\em SIAM Journal on Scientific Computing}, 44(3):A1632--A1654, 2022.

\bibitem[GSS14]{goodfellow2014explaining}
Ian~J Goodfellow, Jonathon Shlens, and Christian Szegedy.
\newblock Explaining and harnessing adversarial examples.
\newblock {\em arXiv preprint arXiv:1412.6572}, 2014.

\bibitem[KH{\etalchar{+}}09]{krizhevsky2009learning}
Alex Krizhevsky, Geoffrey Hinton, et~al.
\newblock Learning multiple layers of features from tiny images, 2009.

\bibitem[MEK18]{esfahani2018data}
Peyman Mohajerin~Esfahani and Daniel Kuhn.
\newblock Data-driven distributionally robust optimization using the
  wasserstein metric: Performance guarantees and tractable reformulations.
\newblock {\em Mathematical Programming}, 171(1):115--166, 2018.

\bibitem[MMS{\etalchar{+}}17]{madry2017towards}
Aleksander Madry, Aleksandar Makelov, Ludwig Schmidt, Dimitris Tsipras, and
  Adrian Vladu.
\newblock Towards deep learning models resistant to adversarial attacks.
\newblock {\em arXiv preprint arXiv:1706.06083}, 2017.

\bibitem[NYC15]{nguyen2015deep}
Anh Nguyen, Jason Yosinski, and Jeff Clune.
\newblock Deep neural networks are easily fooled: High confidence predictions
  for unrecognizable images.
\newblock In {\em Proceedings of the IEEE conference on computer vision and
  pattern recognition}, pages 427--436, 2015.

\bibitem[PZ19]{panaretos2019statistical}
Victor~M Panaretos and Yoav Zemel.
\newblock Statistical aspects of wasserstein distances.
\newblock {\em Annual review of statistics and its application}, 6:405--431,
  2019.

\bibitem[SNVD17]{sinha2017certifying}
Aman Sinha, Hongseok Namkoong, Riccardo Volpi, and John Duchi.
\newblock Certifying some distributional robustness with principled adversarial
  training.
\newblock {\em arXiv preprint arXiv:1710.10571}, 2017.

\bibitem[TJK23a]{trillos2023multimarginal}
Nicol{\'a}s~Garc{\'\i}a Trillos, Matt Jacobs, and Jakwang Kim.
\newblock The multimarginal optimal transport formulation of adversarial
  multiclass classification.
\newblock {\em J. Mach. Learn. Res.}, 24(45):1--56, 2023.

\bibitem[TJK23b]{trillos2023existence}
Nicol{\'a}s~Garc{\'\i}a Trillos, Matt Jacobs, and Jakwang Kim.
\newblock On the existence of solutions to adversarial training in multiclass
  classification.
\newblock {\em arXiv preprint arXiv:2305.00075}, 2023.

\bibitem[TJKW24]{trillos2024optimal}
Nicol{\'a}s~Garc{\'\i}a Trillos, Matt Jacobs, Jakwang Kim, and Matthew
  Werenski.
\newblock An optimal transport approach for computing adversarial training
  lower bounds in multiclass classification.
\newblock {\em arXiv preprint arXiv:2401.09191}, 2024.

\bibitem[ZAM22]{zhao2022adversarial}
Weimin Zhao, Sanaa Alwidian, and Qusay~H Mahmoud.
\newblock Adversarial training methods for deep learning: A systematic review.
\newblock {\em Algorithms}, 15(8):283, 2022.

\end{thebibliography}
\clearpage
\appendix
\section{Computation times for exhaustive search}

\begin{table}[!ht]
    \centering
    \begin{tabular}[b]{l|r|r}
    \hline
        budget & no. configs & mean search time (sd) \\ \hline
        0.08 & 3676 & 0.08 (0.013) \\ %\hline
        0.1 & 6870 & 0.16 (0.065) \\ %\hline
        0.12 & 14905 & 0.19 (0.057) \\
        0.14 & 33191 & 0.39 (0.025) \\ %\hline
        0.16 & 73130 & 0.78 (0.017) \\
        0.18 & 163638 & 1.64 (0.065) \\ %\hline
        0.2 & 377204 & 3.93 (0.248) \\ 
        0.22 & 874413 & 9.22 (0.369) \\ %\hline
        0.24 & 2051231 & 22.54 (0.441) \\
        0.26 & 4500911 & 49.41 (0.775) \\ %\hline
        0.28 & 9657249 & 106.77 (1.044) \\
    \end{tabular}
    \caption{The computation times for the exhaustive search on the synthetic data example. The time was measured in 3 independent runs using a parallelized code on 4 cores. The number of feasible configurations with finite cost quickly blows up, as does the time spent.}
    \label{tab:TwoDim-nconf}
\end{table}

\end{document}